\newcommand*{\Xbar}{}%
\DeclareRobustCommand*{\Xbar}{%
  \mathpalette\@Xbar{}%
}
\newcommand*{\@Xbar}[2]{%
  \sbox0{$#1\mathrm{X}\m@th$}%
  \sbox2{$#1X\m@th$}%
  \rlap{%
    \hbox to\wd2{%
      \hfill
      $\overline{%
        \vrule width 0pt height\ht0 %
        \kern\wd0 %
      }$%
    }%
  }%
  \copy2 %
}
\pgfplotsset{compat=newest}
   \titleformat*{\section}{\large\bfseries}
   \titleformat*{\subsection}{\bfseries}
\theoremstyle{plain}
\newtheorem{thm}{Theorem}[section]
\theoremstyle{definition}
\newtheorem{defn}{Definition}[section]
\theoremstyle{remark}
\numberwithin{equation}{section}
\def\({\left(}
\def\){\right)}
\def\[{\left[}
\def\]{\right]}
\newtheoremstyle{theorem}{\topsep}{\topsep}%
     {}
     {}
     {}
     {}
     {.5em}
     {\thmname{{\bfseries #1}}\thmnumber{ #2}\thmnote{ #3}.}
\theoremstyle{theorem}
\newtheoremstyle{remark}{\topsep}{\topsep}%
     {}
     {}
     {\it}
     {}
     {.5em}
     {\thmname{{ #1}}\thmnumber{ #2}\thmnote{ #3}.}
\theoremstyle{remark}
\title{fCIR}
\author{{ Mpanda}\;\footnote{Corresponding author,
\:\:email: mpandmm@unisa.ac.za}\;\;\; and \;\;\;{MM. Mpanda}\\
{\small  UNISA} \\
}
\date{}
\begin{document}

\thispagestyle{empty}
   ~~
\vspace{1cm}
\begin{center}

\textbf{\Large{Generalisation of Fractional Cox-Ingersoll-Ross Process}} \\[2mm]
            \vspace{0.5cm}
            \large{Marc M. Mpanda\footnote{Corresponding author, mpandmm@unisa.ac.za}, Safari Mukeru*, Mmboniseni Mulaudzi**} \\[5mm]
                                  Department of Decision Sciences\\
             \small{University of South Africa, P. O. Box 392, Pretoria, 0003. South Africa}\\
             \small{mpandmm@unisa.ac.za, \, *mukers@unisa.ac.za, \, **Mulaump@unisa.ac.za}\\
\end{center}

\noindent

 \begin{abstract}

\noindent
In this paper, we define a generalised fractional Cox-Ingersoll-Ross process  $(X_t)_{t\geq 0}$ as a square of singular stochastic differential equation with respect to fractional Brownian motion with Hurst parameter $H \in (0,1)$ taking the form $dZ_t = \left(f(t, Z_t) Z_t^{-1}dt + \sigma dW_t^H\right)/2$, where $f(t,z)$ is a continuous function on $\mathds{R}_+^2$. Firstly, we show that this differential equation has a unique solution $(Z_t)_{t\geq 0}$ which is continuous and positive up to the time of the first visit to zero. In addition,  we prove that the stochastic process $(X_t)_{t\geq 0}$ satisfies the differential equation $dX_t = f(t,\sqrt{X_t})dt + \sigma\sqrt{X_t}\circ dW_t^H $ where $\circ$ refers to the Stratonovich integral. Moreover, we prove that the process $(X_t)$ is strictly positive everywhere almost surely for $H>1/2$. In the case where $H<1/2$, we consider a sequence of increasing functions $(f_n)$ and we prove that the probability of hitting zero tends to zero as $n \to \infty$. These results are illustrated with some simulations using the generalisation of the extended Cox-Ingersoll-Ross process.\\

\vspace{0.5cm}
\noindent
\textbf{Keywords:} Fractional Brownian motion, Fractional Cox-Ingersoll-Ross process, Hitting times, Stratonovich integral.

\end{abstract}

\newpage
\section{Introduction}\label{Intro}

In mathematics of finance, the Cox-Ingersoll-Ross (\emph{CIR}) process is a diffusion process that was initially introduced by \citet{cox1985theory} to model the dynamics of interest rates. In a probability space $(\Omega, \mathcal{F}, \mathds{P})$, the \emph{CIR} process satisfies the following stochastic differential equation:
\begin{equation}\label{1-1}
dX_t = \theta(\mu-X_t)dt + \sigma\sqrt{X_t}dW_t,
\end{equation}
where $\theta$  is a positive parameter that represents the speed of reversion of the stochastic process $(X_t)_{t\geq 0}$ towards its long-run mean $\mu >0$, $\sigma>0$ is the volatility of $(X_t)_{t\geq 0}$ and  $(W_t)_{t\geq 0}$ is the standard Brownian motion.\\

\noindent
The \emph{CIR} process has several interesting properties: its sample paths are strictly positive provided that the condition $2\theta \mu > \sigma^2$ holds, it is mean reverting in the sense that the process is pulled towards its long-run mean $\mu$ when it goes higher or lower than $\mu$. Moreover, the \emph{CIR} process admits a stationary distribution and it is ergodic. For more details, see e.g. \citet{going2003survey}, \citet{chou2006some} and \citet{guo2008note} with references therein. These properties were the main motivations of using the \emph{CIR} process in modeling the dynamics of interest rates \citep{cox1985theory} and the random behavior of spot volatility \citep{heston1993closed}.\\

\noindent
Since the standard \emph{CIR} process is driven by a Brownian motion, it does not display memory. Recently, it was shown that there is a certain range of dependency within financial data. For example, spot volatilities may display long-range dependency as discussed by \citet{comte1998long} and \citet{chronopoulou2010hurst}, or short range dependency known as ``\emph{rough volatility}'' as demonstrated by \citet{gatheral2018volatility} and \citet{livieri2018rough} with references therein. This was a motivation of replacing the standard Brownian motion in (\ref{1-1}) by a fractional Brownian motion (\emph{fBm}) as source of randomness.\\

\noindent
Although the empirical definition of fractional Cox-Ingersoll-Ross (\emph{fCIR}) process can be now formulated as a \emph{CIR} process driven by a \emph{fBm}, \citet{mishura2018stochastic} define the \emph{fCIR} process  as a stochastic process $(X_t)_{t\geq 0}$ given by
$$
X_t(\omega) = Z^2_t(\omega)\mathbf{1}_{[0,\tau(\omega))}, ~~~~~~\forall t \geq 0, ~~\omega\in\Omega,
$$
\noindent
where $(Z_t)_{t\geq 0}$ is a fractional Ornstein–Uhlenbeck process that satisfies the stochastic differential equation $dZ_t = -\frac{1}{2}\theta Z_t dt + \frac{1}{2}\sigma dW_t^H$, with $W_t^H$ a \emph{fBm} of Hurst parameter $H\in(0,1)$, and where $\tau$ is the first time the process $(Z_t)_{t\geq 0}$ hits zero. On the other hand, \citet{mishura2018fractional} consider the process $(Z_t)_{t\geq 0}$ defined by the equation
 \begin{equation}\label{1-2}
            dZ_t = \mfrac{1}{2}\Big(\mu -  \theta Z_t^2 \Big)Z_t^{-1}dt + \mfrac{1}{2}\sigma dW_t^H,
        \end{equation}
\noindent
(where $\mu$, $\theta$  and  $\sigma > 0$ are  parameters) and the corresponding process $$
X_t(\omega) = Z^2_t(\omega)\mathbf{1}_{[0,\tau(\omega))}, ~~~~~~\forall t \geq 0, ~~\omega\in\Omega.
$$

\noindent
\citet{mishura2018fractional} proved that the \emph{fCIR} process $(X_t)_{t\geq 0}$ given by (\ref{1-2}) verifies the equation given by

   \begin{eqnarray} \label{mishstrat1}
X_t = X_0 + \int_{0}^{t} (\mu - \theta X_s)ds + \sigma\int_{0}^{t}\sqrt{X_s}\circ dW_s^H,
\end{eqnarray}

\noindent
where $\int_{0}^{t}\sqrt{X_s}\circ dW_s^H$ is a Stratonovich integral with respect to the \emph{fBm} $(W_s^H)_{s\geq 0},$\\$H\in(0,1)$. Moreover, they proved that the process $(X_t)_{t\geq 0}$ is strictly positive and will never hit zero for $H > \frac{1}{2}$. For $H<\frac{1}{2}$, they obtained that the probability of hitting zero converges to $0$ when the speed of reversion $\theta$ tends to infinity. In addition, \citet{hong2019optimal} investigated  strong convergence of some numerical approximations for \emph{fCIR} process in the case where $H\geq \frac{1}{2}$.\\

\noindent
In this paper, we extend the results of \citet{mishura2018fractional} to a general process $(Z_t)_{t\geq 0}$ defined by the stochastic differential equation
           \begin{equation}\label{1-3}
            dZ_t = \mfrac{1}{2}f(t,Z_t)Z_t^{-1}dt + \mfrac{1}{2}\sigma dW_t^H,\,\, Z_0 >0,
        \end{equation}
where $f:[0, \infty) \times [0, \infty) \to (-\infty, \infty)$, $(t, x) \mapsto  f(t,x)$, is a continuous drift function. We consider as previously a \emph{fCIR} process defined by
$$
X_t(\omega) = Z^2_t(\omega)\mathbf{1}_{[0,\tau(\omega))}, ~~~~~~\forall t \geq 0, ~~\omega\in\Omega.
$$

\noindent

\noindent
This general case has been previously investigated by \citet{hu2008singular} and \citet{nualart2002regularization} under some additional assumptions on the drift function $f$.
\citet{hu2008singular} proved that if $H > 1/2$ and if the drift function $f(t, x)$ is such that the function $g$ given by $g(t,x) = f(t, x)/(2x)$ satisfies the following conditions,
    \begin{itemize}
    \item[(C1)] $g: [0, \infty) \times (0, \infty) \to [0, \infty)$ is a nonnegative continuous function which has a continuous partial derivative $\partial g(t, x)/\partial x \leq 0$ for all $(t,x) \in (0, \infty) \times (0, \infty),$
    \item[(C2)] There exist $x_1> 0$, $a > \frac{1}{H} - 1$ and a continuous function $\varphi:[0, \infty) \to [0, \infty)$ with $\varphi(t) > 0 $ for all $t >0$ such that  $g(t, x) \geq \varphi(t) x^{-a}$ for all $t \geq 0$ and $0 < x < x_1$,
    \end{itemize}
then (\ref{1-3}) has a strictly positive solution $(Z_t)_{t \geq 0}$ that is, almost surely $Z_t >0$ for all $t>0$. (See Theorem 2.1 and Theorem 3.1 in \citet{hu2008singular}). In addition, they also showed that
     \begin{itemize}
       \item[(C3)] if there exists a function $h:[0, \infty) \to [0, \infty)$ which is nonnegative and  locally bounded such that  $g(t, x) \leq h(t) (1 + 1/x)$ for all $t \geq 0$ and $x >0$, then the solution $(Z_t)_{t \geq 0}$ is such that for any fixed $T>0$,
          $$\mathbb{E}\left(\sup_{0 \leq t \leq T} |Z_t|^p \right) < \infty, ~~~\forall p >0.$$
     \end{itemize}

\noindent
Our objective is to study the solution to the stochastic differential equation (\ref{1-3}) under some mild conditions weaker than conditions than (C1) and (C2). We shall consider the following conditions:
 \begin{itemize}
 \item[(D1)] The function $g: [0, \infty) \times (0, \infty) \to (-\infty,  \infty)$ defined by
$g(t,x) = f(t,x)/(2x)$ is continuous and admits a continuous partial derivative with respect to $x$ on $(0, \infty)$. In addition, there exists a number $x^*>0$ such that for every $x>x^*$, $g(t,x)<0$, for all $t\geq 0$.
  \item[(D2)] for any $T >0$, there exists  $x_{T} >0$  such that
 $$f(t, x) > 0 \mbox{  for all }  0 < t \leq T \mbox{ and } 0 \leq x \leq x_{T}.$$
  \end{itemize}

\noindent
Condition (D2) implies that for all $S>0$ and $T>0$, there exists $x_T>0$ such that  $\inf\{f(t,x): S \leq t \leq T, 0 \leq x \leq x_T\} > 0.$ \\ 

\noindent
We shall first show that condition (D1) and the initial condition $Z_0 >0$ guarantee the existence, uniqueness, continuity and positiveness of a solution $(Z_t)$ to equation (\ref{1-3}) up to the first time it hits zero. In addition, We will show that the square stochastic process $(X_t)_{t\geq 0}$ (which is also defined up to the first time  it hits zero) satisfies the stochastic differential equation
    $$
        dX_t = f(t,\sqrt{X_t})dt + \sigma\sqrt{X_t}\circ dW_t^H,\,\, X_0 > 0, H\in(0,1).
    $$

\noindent
We shall also prove that in the case where $H > 1/2$, the solution to the stochastic differential equation (\ref{1-3}) is not only positive up to the time of the first visit to zero but it is strictly positive everywhere. In other words, almost surely it never hits zero on the whole line $[0, \infty)$. It is remarkable that this result is true under mild conditions (D1) and (D2). \\

\noindent
In the case where $H <\frac{1}{2}$, we obtain that the probability of the process $(X_t)_{t\geq 0}$ hitting zero is small if the drift function $f$ is sufficiently large. More precisely, if $(f_n)_{n\in \mathds{N}}$ is an increasing sequence of continuous functions $f_n$ defined on $[0, \infty) \times [0, \infty)$ and taking values in $\mathbb{R}$ and satisfying conditions (D1) and (D2), such that  $\lim_{n\to \infty} f_n = \infty$ and $(X_t^n)$ is the solution to equation (\ref{1-3})  corresponding to $f_n$ (up to the first time it hits zero), then the probability of $(X_t^n)$ hitting zero converges to 0 as $n\to \infty$.
Our results generalize the results recently by  \citet{mishura2018fractional} for the function $f(t,x) = \frac{1}{2}(\mu - \theta x^2)$ for constants $\mu>0$ and $\theta >0$.
We provide some illustrating examples using simulation. \\

\noindent
More recently  \citet{kubilius2020estimation} studied the stochastic differential equation
\begin{eqnarray} \label{Eq1-5}
    dX_t = g(X_t) dt + \sigma X_t^\beta dW^H_t\end{eqnarray}
for $1/2 < H < 1$, $1/2 \leq \beta <  1$ and where the function $g$ is such that there exists a continuously differentiable function $f$ defined on $(0, \infty)$ such that: (1) $g(x) = x^{\beta} f(x^{1-\beta})$, (2) there exist $a>0$ and $\alpha \geq 0$ such that $f(x) > a x^{-(1+\alpha)}$ for sufficiently small $x$ and (3) there exists $K \in \mathbb{R}$ such that $f'(x) \leq K.$ Under these conditions, it is proven that equation (\ref{Eq1-5}) has a unique and positive solution  and derived an important estimator of the $H$ for the solution. In some sense our model (\ref{1-3}) extends (\ref{Eq1-5}).
It would be interesting to carry out an analysis of the $H$ parameter of the solution to equation (\ref{1-3}) as in \citet{kubilius2020estimation}. \\

\noindent
The rest of this paper is organised as follows. Section 2 discusses the existence and uniqueness of the generalised \emph{fCIR} processes.  In Section 3 we show such processes satisfy a stochastic differential equation with respect to Stratonovich integral. The  positiveness of these processes for $H>1/2$ is given in section 4 and for $H < 1/2$ in section 5. Section 6 contains some illustrations of the main results using simulation and finally the last section contains some concluding remarks.

\section{The generalised \emph{fCIR} processes}

In this section, we consider a more general process $(Z_t)_{t\geq 0}$ defined by the differential stochastic equation:
\begin{equation}\label{3-1}
        dZ_t = \frac{f(t,Z_t)}{2Z_t}dt+\frac{\sigma}{2}dW_t^H,\,\,\, Z_0 > 0
\end{equation}
where $f:[0, \infty)\times[0, \infty) \to (-\infty, \infty)$, $(t, z) \mapsto f(t,z)$ is a continuous function satisfying conditions (D1) and (D2).
We shall first discuss the existence and uniqueness of the solution to (\ref{3-1}).

\begin{thm}
If the drift function $f(t,x)$ satisfies condition $(D1)$, then for all $0 < H < 1$, equation (\ref{3-1}) has a unique solution $(Z_t)_{t\geq 0}$ which is continuous and positive up to the time of the first visit to $0$.
\end{thm}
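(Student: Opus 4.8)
The plan is to treat \eqref{3-1} pathwise and reduce it to a classical (random) ordinary differential equation to which the Cauchy--Lipschitz theory applies. Fix $\omega$ in the full-measure event on which $t\mapsto W_t^H(\omega)$ is continuous; such an event exists for every $H\in(0,1)$, which is exactly why the conclusion is insensitive to the value of $H$. Write $w(t)=W_t^H(\omega)$, so that $w(0)=0$ and $w$ is continuous, and set $Y_t=Z_t-\tfrac{\sigma}{2}w(t)$. As long as $Z_t>0$, equation \eqref{3-1} is then equivalent to $Y_t=Z_0+\int_0^t g\big(s,Y_s+\tfrac{\sigma}{2}w(s)\big)\,ds$, i.e.\ to the ODE $Y_t'=G(t,Y_t)$ with $Y_0=Z_0$, where $g(t,x)=f(t,x)/(2x)$ and $G(t,y):=g\big(t,y+\tfrac{\sigma}{2}w(t)\big)$. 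The randomness now enters only through the continuous coefficient $G$.

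First I would prove local existence and uniqueness. On the open set $\mathcal{O}=\{(t,y):y+\tfrac{\sigma}{2}w(t)>0\}$ the map $G$ is continuous, and by condition (D1) it possesses a continuous partial derivative $\partial_yG(t,y)=\partial_xg\big(t,y+\tfrac{\sigma}{2}w(t)\big)$; hence $G$ is locally Lipschitz in $y$, uniformly on compact subsets of $\mathcal{O}$. Since $Z_0>0$ gives $(0,Z_0)\in\mathcal{O}$, the Picard--Lindel\"of theorem provides a unique maximal solution $Y$ on an interval $[0,T_{\max})$, and hence a unique continuous $Z_t=Y_t+\tfrac{\sigma}{2}w(t)>0$ on $[0,T_{\max})$.

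The conceptual core of the argument---and the step where (D1) is genuinely used---is to control how this maximal solution can terminate. By the standard ``escape from every compact subset'' alternative, if $T_{\max}$ is finite then $(t,Y_t)$ must leave every compact subset of $\mathcal{O}$ as $t\uparrow T_{\max}$, which can occur only through $Z_t\to 0$ or $Z_t\to+\infty$. The main obstacle is to exclude blow-up. Fix a horizon $T$ and put $M=\sup_{0\le t\le T}|w(t)|<\infty$. Whenever $Y_t>x^*+\tfrac{\sigma}{2}M$ one has $Z_t=Y_t+\tfrac{\sigma}{2}w(t)>x^*$, so $g(t,Z_t)<0$ by (D1) and therefore $Y_t'=G(t,Y_t)<0$; thus $Y$ is strictly decreasing above the level $x^*+\tfrac{\sigma}{2}M$ and stays bounded above by $C:=\max\big(Z_0,\,x^*+\tfrac{\sigma}{2}M\big)$ on $[0,T_{\max})\cap[0,T]$. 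Consequently $Z_t\le C+\tfrac{\sigma}{2}M$ there, ruling out explosion in finite time.

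Combining these facts, the maximal solution is unique, continuous and strictly positive on $[0,T_{\max})$, and if $T_{\max}<\infty$ the only admissible termination is $Z_t\to 0$; this identifies $T_{\max}$ with the first visit $\tau:=\inf\{t\ge0:Z_t=0\}$ (and $\tau=+\infty$ otherwise). Confirming that $Z_t$ actually tends to $0$ (rather than merely $\liminf_{t\uparrow T_{\max}}Z_t=0$) is a routine point: on any band $[\delta/2,\delta]$ bounded away from $0$ the coefficient $g$ is bounded and $w$ is uniformly continuous, so $Z$ crosses such a band only at a rate bounded below, precluding infinitely many oscillations in finite time. Finally, since the whole construction is carried out pathwise and depends measurably on the continuous trajectory $w$, the resulting $(Z_t)$ is a bona fide stochastic process; this measurability is the only probabilistic remark needed, the entire analytic content residing in the deterministic ODE argument above.
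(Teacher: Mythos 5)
Your proposal is correct, and it proves the theorem by a genuinely different route than the paper. The paper works directly on the stochastic equation: it builds stopped Picard iterates $Z_{n+1}(t)=Z_0+\int_0^t g(s,Z_n(s))\,ds+\tfrac{\sigma}{2}W_t^H$, frozen at a level $\ell\in\big(0,\min(Z_0,x^*)\big)$, proves the key technical estimate that the hitting times $\tau_{n,\ell}$ of that level admit a lower bound $\eta>0$ independent of $n$, confines the iterates to $[\ell,B]$ using the negativity clause of (D1), obtains convergence and uniqueness from Gr\"onwall's lemma, and finally sends $\ell\downarrow 0$. You instead eliminate the noise by the shift $Y_t=Z_t-\tfrac{\sigma}{2}W_t^H(\omega)$, reducing (\ref{3-1}) to a deterministic ODE with continuous, locally Lipschitz coefficient on the region where $Z>0$, and then invoke classical Cauchy--Lipschitz theory: local existence and uniqueness, the maximal interval of existence with the escape-from-compacts alternative, plus an a priori upper bound (from the same negativity clause $g(t,x)<0$ for $x>x^*$) to exclude blow-up, so that the maximal solution can only terminate by reaching zero. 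Both arguments consume (D1) in exactly the same two ways (continuity of $\partial_x g$ gives the Lipschitz property; negativity above $x^*$ gives the upper bound), but your packaging is more modular: it isolates and dispatches the blow-up issue explicitly, which the paper handles only implicitly through the bound $B$ on its iterates; it avoids the somewhat delicate uniform-in-$n$ stopping-time estimate; and it makes transparent why every $H\in(0,1)$ works, since only continuity of the sample paths is used. What the paper's hands-on iteration buys in return is self-containedness (no appeal to maximal-solution machinery) and a construction that mirrors the Euler-type discretisation used later in the simulations. Two minor points in your write-up deserve tightening rather than correction: the assertion that the solution must satisfy $Z_t\to 0$ or $Z_t\to+\infty$ at a finite $T_{\max}$ requires either the oscillation argument you sketch or the ``permanent escape'' form of the compactness lemma (which, combined with your upper bound, settles it at once), and the closing measurability remark, while routine, is stated no more formally than in the paper itself; neither constitutes a gap.
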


\begin{proof}
	Let $\ell>0$ be a small number such that $\ell<Z_0$ and \textcolor[rgb]{0.50,0.25,0.25}{$\ell<x^*$}. For fixed $T>0$, consider the sequence of processes $(Z_n(t))$ defined on $[0, T]$ by
		$$Z_0(t) = Z_0, ~~~\text{for all} ~~~ t\in[0,T]$$ and for all $n\in \mathbb{N}$,
	\begin{eqnarray*}
		  Z_{n+1}(t) = \left\{ \begin{array}{ll} Z_0 + \int_0^{t} g(s, Z_n(s)) ds + \frac{\sigma}{2} W_t^H,& ~~~ \text{if} ~~~ t\leq \tau_{n,\ell}\\[2mm]
		  	\ell & ~~ \text{otherwise} \end{array}\right.
	\end{eqnarray*}
	where $g(t,x) = f(t,x)/(2x)$ and $\tau_{n,\ell} = \inf\{0 \leq t \leq T : Z_{n}(t) = \ell\}$ (the first time the process $(Z_n(t))$ reaches the level $\ell$ with $\inf(\emptyset) = +\infty$). Clearly, if $Z_n(t)$ does not reach the level $\ell$ on $[0,T]$, then
	 $Z_{n+1}(t)$ is defined  by
	 $$
	 Z_{n+1}(t) = Z_0 + \int_0^{t} g(s, Z_n(s)) ds + \frac{\sigma}{2} W_t^H
	 $$
	for all $t \in [0, T]$. For instance
	 	$$
	 	Z_{1}(t) = Z_0 + \int_0^{t} g(s, Z_0) ds + \frac{\sigma}{2} W_t^H, ~~t \in [0, T].
	 	$$

\noindent	
We want to  show that there exists a number $\eta >0 $ independent of $n$ and such that $\tau_{n,\ell} \geq \eta$ for all $n$.
	It is clear that $\tau_{n,\ell} \geq \tau_{n+1, \ell}$ because
		 $Z_{n+1}(t)  = \ell$ for all $t \geq \tau_{n,\ell}$. The function $t \mapsto g(t, Z_n(t))$ is bounded on $t \in [0, \tau_{n,\ell}]$. Indeed, for every $t \in [0, \tau_{n,\ell}]$,
		 write
		    $[0, t]  = I_1 \cup  I_2$ where
		    $I_1$ is the union of sub-intervals of $[0, t]$ where $Z_n \leq x^*$ and $I_2$ is the union of sub-intervals of $[0, t]$ where $Z_n > x^*$.
		    Then
		 $$\int_0^t g(s, Z_n(s)) ds = \int_{I_1} g(s, Z_n(s))ds + \int_{I_2} g(s, Z_n(s))ds \leq \int_{I_1} g(s, Z_n(s))ds$$
	because $g(s, Z_n(s)) < 0$  for $s\in I_2$ by condition (D1). Therefore
	\begin{eqnarray*} 
		Z_{n+1}(t)  = Z_0 + \int_0^t g(s, Z_n(s))ds + \frac{\sigma}{2} W_t^H \\
		            \leq Z_0 + \int_{I_1} g(s, Z_n(s))ds + \frac{\sigma}{2} W_t^H. \nonumber
		            	\end{eqnarray*}
	           Let
	 $$  A = \sup ( \{|g(s, z)|: s \in [0, T] \text{ and } z \in [\ell, x^*]\}).$$ Clearly $A < \infty$ because $g$ is continuous on $[0, +\infty) \times (0, +\infty).$ Because for $s \in I_1$, $Z_n(s) < x^*$, then
	 $$Z_{n+1}(t) \leq Z_0 + A t + \frac{\sigma}{2} W_t^H  \leq B $$
	 where $$B = Z_0 + A T + \frac{\sigma}{2}\sup_{0\leq t \leq T}|W_t^H|.$$
	 (Here the bound $B$ is independent of $n$). Therefore for all $t \in [0, \tau_{n,\ell}]$, we have that    $Z_{n+1}(t) \in  [\ell, B]$ for all $n \in \mathbb{N}$. Since  $\tau_{n+1, \ell} \leq \tau_{n,\ell}$, it follows in particular that
	    $$Z_{n+1}(t)  \in  [\ell, B] \mbox{ for all } 0 \leq t\leq \tau_{n+1, \ell}.$$
Moreover, since by definition,  $$Z_{n+1}(t) = Z_0 + \int_0^{t} g(s, Z_n(s)) ds + \frac{\sigma}{2} W_t^H, $$
taking $t = \tau_{n+1, \ell}$ yields
   $$ \ell  = Z_0 + \int_0^{\tau_{n+1,\ell}} g(s, Z_n(s)) ds + \frac{\sigma}{2} W_{\tau_{n+1, \ell}}^H.$$
   Set $$K = \sup ( \{|g(s, z)|: s \in [0, T] \text{ and } z \in [\ell,  B]\}),$$then
   $$ \ell \geq Z_0 - K \tau_{n+1, \ell} +  \frac{\sigma}{2} W_{\tau_{n+1, \ell}}^H.$$
That is
      $$\frac{\sigma}{2} W_{\tau_{n+1, \ell}}^H \leq  \ell -  Z_0 + K \tau_{n+1,\ell}.$$	
This implies that
 $$\tau_{n+1,\ell}\geq \inf\{ t \geq 0: \frac{\sigma}{2} W_t^H \leq  \ell -  Z_0 + K t\} .$$
Set $$\eta = \inf\{ t \geq 0: \frac{\sigma}{2} W_t^H \leq  \ell -  Z_0 + K t\}.  $$

\noindent
Clearly $\eta > 0$ because obviously the fractional Brownian motion $(W_t^H)$  starts at 0, that is, $W_0^H = 0$ and $\ell < Z_0$. Hence, $\tau_{n+1,\ell}\geq \eta>0$ uniformly for $n$ (and $\eta$ is independent of $n$). \\

\noindent
Let $\tau_\ell = \inf_{n\geq 0} \tau_{n,\ell}$, then $\tau_\ell \geq \eta>0$. We will then show that the problem has a positive solution on the interval $[0,\tau_\ell]$. For all $n$ and all $t\in[0,\tau_\ell]$, $Z_n(t)\geq \ell$ and $Z_n(t)\leq B$. \\

\noindent
Since the function $g(t,x)$ admits a partial derivative with respect to $x$ on $(0, \infty)$, then in particular for fixed $t$, the function $(t,x) \mapsto g(t,x)$ is uniformly Lipschitz for $x$ in a bounded closed interval away from 0.\\

\noindent
Since for all $t\in [0, \tau_{\ell}]$, $Z_n(t)\in [\ell, B]$, then there exists $C>0$ such that $$|(g(t,Z_{n}(t)) - g(t,Z_{n-1}(t)))| \leq C |Z_n(t) - Z_{n-1}(t)|$$ for all $t\in [0,\tau_\ell]$. Therefore,
~~
\begin{eqnarray*}
	|Z_{n+1}(t) - Z_n(t)|  &\leq&  \int_{0}^{t} |(g(s,Z_{n}(s)) - g(s,Z_{n-1}(s)))| ds \\
	                   &\leq & C  \int_0^{t} |Z_n(s) - Z_{n-1}(s)| ds.
\end{eqnarray*}

\noindent	
Then an application of Gr\"onwall's lemma implies that the sequence $(Z_n(t))$ converges uniformly on the interval $[0, \tau_{\ell}]$ and hence its limit is a positive continuous solution to (\ref{1-3}) on $[0, \tau_{\ell}]$. Therefore, equation (\ref{1-3}) admits a positive solution up to the first time it hits the level $\ell$. For the uniqueness of the solution, if $(Z_t)$ and $(Y_t)$ are two solutions on some interval $[0, \tau_\ell)$ starting at the same point $Z_0$, then
for any $t < \tau_\ell$,
\begin{eqnarray*}
	|Z_t - Y_t| \leq  \int_0^{t} |(g(s, Z_s) - g(s, Y_s))| ds  \leq  C \int_0^{t} |Z_s - Y_s| ds.
\end{eqnarray*}

\noindent
Again Gr\"onwall's lemma implies that $Z_t = Y_t$ everywhere in $[0, \tau_\ell).$ Since $\ell>0$ can be taken arbitrary small, this implies the existence of a solution up to the first time it hits $0$.
\end{proof}

\begin{defn}\label{Def3-1}
The stochastic process $(X_t)_{t\geq 0}$ defined by
           \begin{equation}\label{3-3}
            X_t = Z^{2}_t\mathbf{1}_{[0,\tau)}(t),\,\, t\geq 0,\,\,\tau = \inf\{t>0: Z_t = 0\}
        \end{equation}
\noindent
where $(Z_t)_{t\geq 0}$ is the solution to (\ref{3-1}) will be called the generalised \emph{fCIR} process defined by the function $f$.\\[-3mm]
\end{defn}

\noindent
\textbf{Remark.} When $f(t,z) = (\mu - \theta z^2)$ where $\theta$ and $\mu$ are constants, the generalised \emph{fCIR} process $(X_t)_{t\geq 0}$ coincides with the \emph{fCIR} process given by \citet{mishura2018fractional}.
In addition, when the speed of reversion or the long-run mean are time dependent, that is $\theta = \theta_t$ or $\mu = \mu_t$ with $f(t,z) = \theta_t(\mu_t - z^2)$, the process $(X_t)_{t\geq 0}$ can be regarded as an extended \emph{fCIR} process (or a fractional Hull-White model that has been used by \citet{pan2017pricing} for pricing options). The latter process  is very important not only because of the mean-reverting and positiveness properties but also because of  the possibility of a perfect calibration of parameters to the market data.

\section{Connection to Stratonovich integral}
We recall that given two stochastic processes $(X_t)_{t\in [0,T]}$ and $(Y_t)_{t \in [0,T]}$, the pathwise Stratonovich integral $\int_0^T Y_s\circ dX_s$ is defined as a pathwise limit (when it exists) given by
\begin{equation}\label{2-2}
 \lim_{n\rightarrow \infty} \sum_{i=1}^n \left(\frac{Y_{t_i} +Y_{t_{i-1}}}{2}\right)(X_{t_i} - X_{t_{i-1}}),
\end{equation}
where $0 = t_0 < t_1 < \ldots< t_{n-1} < t_n = T$ is a partition of the interval $[0, T]$ such that $\sup_{0 \leq i \leq n}|t_{i} - t_{i-1}| \to 0$ as $n \to \infty$.
We have the following result.
\begin{thm}\label{Prop3-2}
Assume that the function $f: [0, \infty) \times [0, \infty) \to \mathbb{R}$ is continuous and satisfies $(D1)$.
Then the corresponding generalised fCIR process $(X_t)$  defined by $(\ref{3-3})$ up to the first time it hits zero satisfies the equation:
\begin{equation}\label{3-4a}
X_t = X_0 + \int_{0}^{t}f(s, \sqrt{X_s})ds + \sigma \int_{0}^{t}\sqrt{X_s}\circ dW_s^H,
\end{equation}
\noindent
where $\int_{0}^{t}\sqrt{X_s}\circ dW_s^H$ is the Stratonovich integral. 
\end{thm}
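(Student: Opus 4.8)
The plan is to argue pathwise on a fixed interval $[0,t]$ with $t$ strictly before the first hitting time $\tau$, so that by the first theorem the solution $(Z_s)_{s\in[0,t]}$ is continuous and bounded away from zero; in particular $\sqrt{X_s}=Z_s$ on this interval. I fix a partition $0=t_0<t_1<\cdots<t_n=t$ with mesh $\delta=\max_i(t_i-t_{i-1})\to 0$ and start from the purely algebraic telescoping
\begin{equation*}
X_t-X_0=Z_t^2-Z_0^2=\sum_{i=1}^n(Z_{t_i}+Z_{t_{i-1}})(Z_{t_i}-Z_{t_{i-1}}).
\end{equation*}
Since $(Z_s)$ solves $(\ref{3-1})$, each increment decomposes as $Z_{t_i}-Z_{t_{i-1}}=\int_{t_{i-1}}^{t_i}g(s,Z_s)\,ds+\frac{\sigma}{2}(W_{t_i}^H-W_{t_{i-1}}^H)$ with $g(s,z)=f(s,z)/(2z)$, which splits the right-hand side into a drift sum $S_1^{(n)}$ and a fractional-noise sum $S_2^{(n)}$.

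For the drift sum I would write $S_1^{(n)}=\sum_{i=1}^n\int_{t_{i-1}}^{t_i}(Z_{t_i}+Z_{t_{i-1}})\,g(s,Z_s)\,ds$ (the endpoint factor is constant on each subinterval) and compare it with $\int_0^t 2Z_s\,g(s,Z_s)\,ds$. Using uniform continuity of $(Z_s)$ on the compact interval $[0,t]$ and the bound $|(Z_{t_i}+Z_{t_{i-1}})-2Z_s|\le 2\,\omega_Z(\delta)$ for $s\in[t_{i-1},t_i]$, together with integrability of the continuous map $s\mapsto g(s,Z_s)$, the difference is controlled by $2\,\omega_Z(\delta)\int_0^t|g(s,Z_s)|\,ds\to 0$. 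Hence $S_1^{(n)}\to\int_0^t 2Z_s\,g(s,Z_s)\,ds=\int_0^t f(s,Z_s)\,ds=\int_0^t f(s,\sqrt{X_s})\,ds$, the last equality because $Z_s>0$. The fractional-noise sum is exactly $S_2^{(n)}=\sigma\sum_{i=1}^n\frac{Z_{t_i}+Z_{t_{i-1}}}{2}(W_{t_i}^H-W_{t_{i-1}}^H)$, i.e.\ $\sigma$ times the symmetric Riemann--Stratonovich sum of definition $(\ref{2-2})$ for the integrand $(Z_s)=(\sqrt{X_s})$.

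The decisive point, and what makes the argument work for every $H\in(0,1)$ without separately invoking Young or rough-path integration, is that the left-hand side $Z_t^2-Z_0^2$ does not depend on the partition while the drift sum $S_1^{(n)}$ converges; consequently $S_2^{(n)}=(Z_t^2-Z_0^2)-S_1^{(n)}$ must converge as well. By the very definition $(\ref{2-2})$ this simultaneously proves that $\int_0^t\sqrt{X_s}\circ dW_s^H$ exists and identifies its value, yielding
\begin{equation*}
X_t-X_0=\int_0^t f(s,\sqrt{X_s})\,ds+\sigma\int_0^t\sqrt{X_s}\circ dW_s^H.
\end{equation*}
The main technical obstacle is therefore not the existence of the Stratonovich integral (which comes for free from the telescoping) but the careful justification of the drift-sum limit and the verification that $(Z_s)$ stays in a fixed interval $[\ell,B]$ on $[0,t]$, so that $g(s,Z_s)$ is bounded and the modulus-of-continuity estimate applies uniformly; letting $t\uparrow\tau$ and using continuity then extends the stated identity up to the first time $(X_t)$ hits zero.
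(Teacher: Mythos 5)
Your proof is correct, and it reaches the conclusion by a genuinely more economical route than the paper's. Both arguments begin with the same telescoping/difference-of-squares identity $X_t - X_0 = \sum_{i}\left(Z_{t_i}+Z_{t_{i-1}}\right)\left(Z_{t_i}-Z_{t_{i-1}}\right)$, but they diverge immediately afterwards. You keep the symmetric factor $Z_{t_i}+Z_{t_{i-1}}$ intact and split only the increment $Z_{t_i}-Z_{t_{i-1}}$ via the integral equation, producing just two sums: a drift sum, which converges to $\int_0^t f(s,Z_s)\,ds$ by uniform continuity of $Z$ and boundedness of $g(s,Z_s)$ on $[0,t]$ (valid because $Z$ stays in some $[\ell,B]\subset(0,\infty)$ when $t<\tau$), and a noise sum which is exactly $\sigma$ times the symmetric sum in (\ref{2-2}) for $\int_0^t Z_s\circ dW_s^H$. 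The existence of the Stratonovich integral then comes for free by subtraction: the left-hand side is partition-independent, the drift sum converges to the same limit along every partition sequence with vanishing mesh, hence the noise sum converges as well and its limit is, by definition, the Stratonovich integral. The paper instead expands both factors in terms of $I(s)=\int_0^s f(u,Z_u)Z_u^{-1}\,du$ and $W_s^H$, obtaining six sums $\mathcal{I}_1,\dots,\mathcal{I}_6$; it computes the limit of $\mathcal{I}_1+\mathcal{I}_2+\mathcal{I}_3$ as $\int_0^t f(s,Z_s)\,ds$ using the differentiability of $I$, and assembles the limit of $\mathcal{I}_4+\mathcal{I}_5+\mathcal{I}_6$ from the auxiliary Stratonovich integrals $\int_0^t I(s)\circ dW_s^H$ and $\int_0^t W_s^H\circ dW_s^H$ (whose existence the paper uses implicitly; they do exist, by smoothness of $I$ and by exact telescoping of the symmetric sums, respectively), recombining via $Z_s = Z_0 + \tfrac12 I(s) + \tfrac{\sigma}{2}W_s^H$. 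Your version avoids the six-term bookkeeping and those implicit existence claims, and it makes transparent why the result holds for every $H\in(0,1)$ with no Young or rough-path machinery; the paper's version, in exchange, exhibits explicitly how the integral decomposes into elementary pieces. One small remark: your closing step of letting $t\uparrow\tau$ is unnecessary, since the identity is established directly at each fixed $t<\tau$, which is all the theorem asserts.
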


\noindent
\textbf{Proof.} Our proof is a generalisation of a proof given by \citet{mishura2018fractional} applied to the particular function $f(t, x) =  f(t,x) = \frac{1}{2}(\mu - \theta x^2).$  As already discussed condition (D1) implies  the uniqueness of the solution $(Z_t)$ up to the first time it hits zero.
For $\tau:= \inf\{s>0 : Z_s = 0\}$ and $t\in[0,\tau)$ fixed, we have from (\ref{3-1}) and (\ref{3-3}) that
        \begin{equation}\label{2-10}
            X_t = Z_t^{2} = \(Z_0 + \frac{1}{2}\int_0^t f(s,Z_s)Z_s^{-1}ds + \frac{\sigma}{2}dW_t^H\)^{2},
        \end{equation}
where $Z_0$ is an initial value of the stochastic process $(Z_t)_{t\in[0,\tau)}$. In discrete time, assume that the interval $[0,t]$ is subdivided into $N$ equal subintervals with\\ $0<t_1<\cdot\cdot\cdot<t_N=t$, the time-steps $\delta t = t/N$, and $t_i = i\delta t, ~  i = 0, \cdot\cdot\cdot, N.$ Then it follows that

\begin{equation*}
\begin{aligned}
X_t &= X_0 + \sum_{i = 1}^N (X_{t_i}-X_{t_{i-1}}) \\
    &= X_0 + \sum_{i = 1}^N \(\[Z_0 + \int_0^{t_i}\frac{1}{2}f(s,Z_s)Z_s^{-1}ds + \frac{\sigma}{2}dW_{t_i}^H\]^{2}\right.\\
    & ~~~~~~~~ \left.- \[Z_0 + \frac{1}{2}\int_0^{t_{i-1}}f(s,Z_s)Z_s^{-1}ds + \frac{\sigma}{2} W_{t_{i-1}}^H\]^{2}\).
\end{aligned}
\end{equation*}
Then
\begin{equation*}
\begin{aligned}
X_t & =  X_0 + \sum_{i = 1}^N \[\frac{1}{2}\int_{t_{i-1}}^{t_i}f(s,Z_s)Z_s^{-1}ds + \frac{\sigma}{2}\( W_{t_i}^H - W_{t_{i-1}}^H\)\]\\
  & ~~~~~~~~ ~~~~~~~~\times \[2 Z_0 + \frac{1}{2}\(\int_0^{t_{i}}f(s,Z_s)Z_s^{-1}ds + \int_0^{t_{i-1}}f(s,Z_s)Z_s^{-1}ds\) \right. \\
  & ~~~~~~~~ ~~~~~~~~ ~~~~~~~~  \left. + \frac{\sigma}{2}\( W_{t_i}^H + W_{t_{i-1}}^H\)\].
  \end{aligned}
\end{equation*}
The last equation above is obtained by factorising the difference of two squares. After some expansions, we obtain that

\begin{equation*}
\begin{aligned}
  X_t & =  X_0   +  Z_0 \sum_{i = 1}^N \int_{t_{i-1}}^{t_i}f(s,Z_s)Z_s^{-1}ds \\
  &  ~~~~~~~~  +  \frac{1}{4} \sum_{i = 1}^N \int_{t_{i-1}}^{t_i}f(s,Z_s)Z_s^{-1}ds\(\int_0^{t_{i}}f(s,Z_s)Z_s^{-1}ds + \int_0^{t_{i-1}}f(s,Z_s)Z_s^{-1}ds\)\\
  &  ~~~~~~~~  +  \frac{\sigma}{4}\sum_{i = 1}^N \( W_{t_i}^H + W_{t_{i-1}}^H\) \int_{t_{i-1}}^{t_i}f(s,Z_s)Z_s^{-1}ds \\
  &  ~~~~~~~~  +  \sigma Z_0 \sum_{i = 1}^N \( W_{t_i}^H - W_{t_{i-1}}^H\)\\
  &  ~~~~~~~~  +  \frac{\sigma}{4}\sum_{i = 1}^N \(\int_0^{t_{i}}f(s,Z_s)Z_s^{-1}ds + \int_0^{t_{i-1}}f(s,Z_s)Z_s^{-1}ds\) \( W_{t_i}^H - W_{t_{i-1}}^H\)              \\
  &  ~~~~~~~~  +  \frac{\sigma^2}{4}\sum_{i = 1}^N \( W_{t_i}^H + W_{t_{i-1}}^H\)\( W_{t_i}^H - W_{t_{i-1}}^H\).
\end{aligned}
\end{equation*}

\noindent
Let
$$
X_t = X_0 + \sum_{k = 1}^{6} \mathcal{I}_k(N,t,Z_t)
$$
where
$$
\begin{cases}
  \mathcal{I}_1(N,t,Z_t) =   Z_0 \mathlarger\sum_{i = 1}^N \mathlarger\int_{t_{i-1}}^{t_i}f(s,Z_s)Z_s^{-1}ds  \\[5mm]
  \mathcal{I}_2(N,t,Z_t) = \frac{1}{4} \mathlarger\sum_{i = 1}^N \mathlarger\int_{t_{i-1}}^{t_i}f(s,Z_s)Z_s^{-1}ds\(\mathlarger\int_0^{t_{i}}f(s,Z_s)Z_s^{-1}ds + \mathlarger\int_0^{t_{i-1}}f(s,Z_s)Z_s^{-1}ds\) \\[5mm]
  \mathcal{I}_3(N,t,Z_t) = \frac{\sigma}{4}\mathlarger\sum_{i = 1}^N \( W_{t_i}^H + W_{t_{i-1}}^H\) \mathlarger\int_{t_{i-1}}^{t_i}f(s,Z_s)Z_s^{-1}ds
  \end{cases}
$$
and
$$
\begin{cases}
  \mathcal{I}_4(N,t,Z_t) = \sigma Z_0 \mathlarger\sum_{i = 1}^N \( W_{t_i}^H - W_{t_{i-1}}^H\)  \\[5mm]
  \mathcal{I}_5(N,t,Z_t) =  \frac{\sigma}{4}\mathlarger\sum_{i = 1}^N \(\mathlarger\int_0^{t_{i}}f(s,Z_s)Z_s^{-1}ds + \mathlarger\int_0^{t_{i-1}}f(s,Z_s)Z_s^{-1}ds\) \( W_{t_i}^H - W_{t_{i-1}}^H\)    \\[5mm]
  \mathcal{I}_6(N,t,Z_t) = \frac{\sigma^2}{4}\mathlarger\sum_{i = 1}^N \( W_{t_i}^H + W_{t_{i-1}}^H\)\( W_{t_i}^H - W_{t_{i-1}}^H\).
\end{cases}
$$

\noindent
Set
$$
I(t) = \int_0^{t}f(s,Z_s)Z_s^{-1}ds.
$$

\noindent
Then it follows that
\begin{equation*}
\begin{aligned}
\sum_{k = 1}^{3} \mathcal{I}_k(N,t,Z_t) &=\sum_{i = 1}^{N}\Big(I(t_i) - I(t_{i-1})\Big)Z_0 \\
                                        & ~~~~~~+ \Big(I(t_i) - I(t_{i-1})\Big)\Big(\frac{(I(t_i) + I(t_{i-1})}{4} + \frac{\sigma(W^H_{t_{i}} + W^H_{t_{i-1}})}{4} \Big).
    \end{aligned}
\end{equation*}

\noindent
Then
$$
\lim_{N\to \infty} \sum_{k = 1}^{3} \mathcal{I}_k(N,t,Z_t) = Z_0 I(t) + \frac{1}{2} \int_{0}^{t} \Big( I(s) + \sigma W^H_{s}\Big)\circ dI(s).
$$

\noindent
Since $I(s)$ is differentiable, then it follows that

\begin{equation*}
\begin{aligned}
\lim_{N\to \infty} \sum_{k = 1}^{3} \mathcal{I}_k(N,t,Z_t) & = Z_0 I(t) + \frac{1}{2} \int_{0}^{t} \Big( I(s) + \sigma W^H_{t_s}\Big) dI(s)\\
                                                            & = \Bigg(\int_0^{t}f(s,Z_s)Z_s^{-1}ds\Bigg)Z_0 \\
                                                            & ~~~~ + \frac{1}{2} \int_{0}^{t} \Bigg( \Big(\int_{0}^{s}f(u,Z_u)Z_u^{-1}du\Big) + \sigma W^H_{s}\Bigg) f(s,Z_s)Z_s^{-1}ds\\
                                                            & = \int_0^{t}f(s,Z_s)Z_s^{-1} \Bigg( Z_0 + \frac{1}{2}\int_0^{s}f(u,Z_u)Z_u^{-1}du  + \frac{\sigma}{2}W^H_{s}\Bigg)ds \\
                                                            & = \int_0^{t}f(s,Z_s)Z_s^{-1}Z_s ds = \int_0^{t}f(s,Z_s)ds. \\
    \end{aligned}
\end{equation*}

\noindent
On the other hand
\begin{equation*}
\begin{aligned}
\sum_{k = 4}^{6} \mathcal{I}_k(N,t,Z_t) & = \sigma Z_0 \mathlarger\sum_{i = 1}^N \( W_{t_i}^H - W_{t_{i-1}}^H\) \\
                                        & ~~~~ + \mathlarger\sum_{i = 1}^N \frac{\sigma}{2} \frac{I(t_i)+I(t_{i-1})}{2} \( W_{t_i}^H - W_{t_{i-1}}^H\) \\
                                        & ~~~~ + \frac{\sigma^2}{4}\mathlarger\sum_{i = 1}^N \( W_{t_i}^H + W_{t_{i-1}}^H\)\( W_{t_i}^H - W_{t_{i-1}}^H\).
\end{aligned}
\end{equation*}
Therefore,
\begin{equation*}
\begin{aligned}
\lim_{N\to \infty}\sum_{k = 4}^{6} \mathcal{I}_k(N,t,Z_t) & = \sigma Z_0 W_{t}^H + \frac{\sigma}{2} \int_{0}^{t} I(s)\circ dW_{s}^H + \frac{\sigma^2}{2}\int_{0}^{t} W_{s}^H \circ dW_{s}^H   \\
                                        & = \sigma Z_0 W_{t}^H +  \frac{\sigma}{2} \int_{0}^{t} \Big(\int_0^{s}f(u,Z_u)Z_u^{-1}du \Big) \circ dW_{s}^H + \frac{\sigma^2}{2}\int_{0}^{t} W_{s}^H \circ dW_{s}^H \\
                                        & = \sigma Z_0 W_{t}^H +  \frac{\sigma}{2} \int_{0}^{t} \Big( 2Z_s - 2Z_0 - \sigma W_{s}^H \Big) \circ dW_{s}^H + \frac{\sigma^2}{2}\int_{0}^{t} W_{s}^H \circ dW_{s}^H \\
                                        & = \sigma \int_{0}^{t} Z_s \circ dW_{s}^H.
\end{aligned}
\end{equation*}

\noindent
The third equality follows the fact that
$$\int_0^s f(u, Z_u) Z_u^{-1} du = 2 Z_s - 2Z_0 - \sigma W_s^H$$ because
           $$Z_s = Z_0 + \frac{1}{2} \int_0^s f(u, Z_u) Z_u^{-1} du + \frac{\sigma}{2} W_s^H.$$
 Now taking $N \to \infty $, that is, $\delta t \to 0$, yields
\begin{equation*}
\begin{aligned}
\lim_{N\to \infty} X_{\delta t  N} = \lim_{\delta t \to 0} X_{\delta t  N}  & = X_0 + \lim_{N\to \infty} \sum_{k = 1}^{6} \mathcal{I}_k(N,t,Z_t) \\
                       & = X_0 + \int_{0}^{t}f(s,Z_s)ds + \sigma\int_0^t Z_s\circ dW_s^H \\
                       & = X_0 + \int_{0}^{t}f\big(s,\sqrt{X_s}\big)ds + \sigma\int_0^t\sqrt{X_s}\circ dW_s^H.
    \end{aligned}
\end{equation*}

\noindent
It follows that $dX_t = f(t,\sqrt{X_t})dt + \sigma\sqrt{X_t}\circ dW_t^H $, which concludes the proof.                        \hfill{$\Box$}\\


\section{Analysis of positiveness of $(X_t)_{t\geq 0}$ for $H >  1/2$}

\begin{thm}\label{Thm3-3}
Assume that $H >\frac{1}{2}$. Let $f:[0, \infty)\times[0, \infty) \to \mathbb{R}$ be a continuous function satisfying  conditions $(D1)$ and $(D2)$.
Then the process $(Z_t)_{t\geq 0}$  defined by
    \begin{equation}\label{3-6a}
    dZ_t = \frac{f(t,Z_t)}{2Z_t}dt+\frac{\sigma}{2}dW_t^H,\,\,\, Z_0 > 0,
    \end{equation}
is strictly positive everywhere almost surely.
\end{thm}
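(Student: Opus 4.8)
The plan is to argue by contradiction on the hitting time $\tau = \inf\{t>0 : Z_t = 0\}$, exploiting the fact that for $H>1/2$ the sample paths of $(W_t^H)$ are almost surely Hölder continuous of every order $\gamma \in (1/2, H)$. It suffices to prove $\mathbb{P}(\tau < \infty) = 0$, so I would fix a sample path on the almost sure event where $W^H$ is $\gamma$-Hölder and suppose, for contradiction, that $\tau < \infty$. By Theorem 2.1 and continuity, $Z_t > 0$ on $[0,\tau)$ while $Z_\tau = 0$.

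The first step is to localise near $\tau$ via condition (D2). Since $Z_s \to 0$ as $s \to \tau^-$, the remark following (D2), applied with $S = \tau/2$ and any fixed $T > \tau$, yields a $\delta \in (0, \tau/2)$ and a constant $m > 0$ such that $f(s, Z_s) \geq m$ for all $s \in [\tau-\delta, \tau)$; in particular the drift $f(s,Z_s)/(2Z_s)$ is strictly positive there. Integrating (\ref{3-6a}) from $s$ to $\tau$ and using $Z_\tau = 0$ produces the identity
\begin{equation*}
Z_s + \int_s^\tau \frac{f(u,Z_u)}{2Z_u}\,du = \frac{\sigma}{2}\big(W_s^H - W_\tau^H\big),
\end{equation*}
whose left-hand side is a sum of two nonnegative terms.

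The heart of the argument is a bootstrap estimate drawn from this identity. Discarding the nonnegative integral gives the upper bound $Z_s \leq \tfrac{\sigma}{2}(W_s^H - W_\tau^H) \leq c\,(\tau-s)^\gamma$, where $c$ absorbs $\sigma$ and the (random) Hölder constant; inverting this, $1/Z_s \geq c^{-1}(\tau-s)^{-\gamma}$, and feeding it together with $f(u,Z_u)\geq m$ back into the integral yields, for $t \in [\tau-\delta,\tau)$,
\begin{equation*}
\int_t^\tau \frac{f(u,Z_u)}{2Z_u}\,du \geq \frac{m}{2c}\cdot\frac{(\tau-t)^{1-\gamma}}{1-\gamma}.
\end{equation*}
On the other hand, discarding the nonnegative term $Z_t$ in the identity bounds the same integral from above by $\tfrac{\sigma}{2}(W_t^H - W_\tau^H) \leq c\,(\tau-t)^\gamma$. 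Combining the two estimates gives $(\tau-t)^{1-2\gamma} \leq 2c^2(1-\gamma)/m$ for all $t$ close enough to $\tau$; since $\gamma > 1/2$ the exponent $1-2\gamma$ is negative, so the left-hand side tends to $+\infty$ as $t \to \tau^-$, which is the desired contradiction.

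I expect the main obstacle to be this self-referential step: one must use the fractional-Brownian fluctuation bound on $Z_s$ itself to control $1/Z_s$ from below inside the very integral that reconstructs $Z$, and then match the exponent $1-\gamma$ produced by the singular drift against the exponent $\gamma$ of the driving increment. The decisive inequality $1 - 2\gamma < 0$ is exactly where $H > 1/2$ is used. One should also note that $c$ and $m$ depend on the path, but since the contradiction is obtained pathwise on an almost sure event this is harmless, giving $\mathbb{P}(\tau < \infty) = 0$ and hence the strict positivity of $(Z_t)$ everywhere almost surely.
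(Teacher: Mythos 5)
Your proof is correct, and it takes a genuinely different route from the paper's. Both arguments are pathwise contradictions on the H\"older-continuity event, both extract from (D2) a positive lower bound on $f$ near the zero of $Z$ (your $m$, the paper's $A$), and both integrate the equation between a time near $\tau$ and $\tau$ itself. But the mechanisms differ. The paper fixes a level $\varepsilon$ and works on $[\tau_\varepsilon,\tau]$, where $\tau_\varepsilon$ is the \emph{last} visit to $\varepsilon$ before $\tau$; there $Z_s\leq\varepsilon$ gives only the constant bound $Z_s^{-1}\geq\varepsilon^{-1}$, leading to the one-parameter family of inequalities $A\varepsilon^{-1}(\tau-\tau_\varepsilon)-c\sigma(\tau-\tau_\varepsilon)^{H-\alpha}+2\varepsilon\leq 0$, which is then refuted by minimising $F_\varepsilon(x)=A\varepsilon^{-1}x-c\sigma x^{H-\alpha}+2\varepsilon$ over $x$ and checking that the exponent $q=\frac{H-\alpha}{1-H+\alpha}$ exceeds $1$ when $H>\tfrac12+\alpha$. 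You instead bootstrap: the H\"older bound applied to the identity $Z_s+\int_s^\tau \frac{f(u,Z_u)}{2Z_u}\,du=\frac{\sigma}{2}(W_s^H-W_\tau^H)$ gives the pointwise decay $Z_s\leq c(\tau-s)^\gamma$, which fed back into the same identity forces the drift integral to grow like $(\tau-t)^{1-\gamma}$ while the increment bounds it by $(\tau-t)^\gamma$, impossible as $t\to\tau^-$ since $1-2\gamma<0$. Your version is shorter and makes the role of $H>1/2$ completely transparent (it enters exactly once, as the sign of $1-2\gamma$), and it needs no last-exit time and no optimisation. What the paper's formulation buys is reusability: the same $\varepsilon$-level inequalities are recycled almost verbatim in the $H<1/2$ analysis of Theorem 5.1, where the contradiction comes from letting the drift constant $A_{k_n}\to\infty$ rather than from an exponent mismatch; your bootstrap would not transfer there. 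Two small points you should make explicit: the convergence of the improper integral $\int_s^\tau \frac{f(u,Z_u)}{2Z_u}\,du$ follows by monotone convergence (the integrand is nonnegative on $[\tau-\delta,\tau)$ and the partial integrals are bounded via the integral equation), and the choice $S=\tau/2$, $T>\tau$ in invoking the remark after (D2) is legitimate precisely because the whole argument is carried out pathwise on a fixed $\omega$ in an almost sure event; neither is a gap, and the paper glosses over the first point as well.
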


\noindent
In the proof we shall make use of the following H\"{o}lder continuous property of fractional Brownian motion of index $H$. In the probability space $(\Omega, \mathcal{F}, \mathds{P})$, ~ $\exists\Omega'\subset \Omega, ~ \mathds{P}({\Omega'}) = 1$, such that $\forall \omega \in \Omega',$ \\ $\forall 0\leq s \leq t$ and $ \forall\alpha > 0, \,\, \exists c = c(\omega, \alpha):$
\begin{equation}\label{2-1}
\big| W^H_t(\omega) - W^H_s(\omega) \big| \leq c\big| t-s\big|^{H-\alpha}.
\end{equation}
For more background on \emph{fBm}, we refer the reader to \citet{alos2003stochastic} and \citet{nourdin2012selected}.

\begin{proof}
We have proven that condition (D1) guarantees the existence, uniqueness and positiveness of a solution up to the first time it hits zero. We shall now prove that the mere condition (D2) that the function $f(t,x) >0 $ on $[0, T] \times (0, x_T]$ for any $T>0$ and $x_T$ depending on $T$ implies that the process $(Z_t)_{t\geq 0}$ never hits zero almost surely. We shall indeded prove that $$\mathds{P}\{\omega \in \Omega: \tau(\omega) = \infty\} = 1, \mbox{where  } \tau(\omega) = \inf\{t\geq 0 : Z_t(\omega) = 0\}.$$
Let us assume that $$\mathds{P}\{\omega \in \Omega:  \tau(\omega) = \infty\} < 1 \mbox{ or equivalently }                 \mathds{P}\{\tau < T\} > 0,$$ for some fixed real $T >0$,
\noindent
and prove that this leads to a contradiction. From now on we fix a real number $x_T$ depending on $T$ for which condition (D2) holds.
Since the sample paths of \emph{fBm} $(W^H_t)_{t\geq 0}$ are (almost surely) locally H\"{o}lder continuous of order $H-\alpha$ (for each small number $\alpha >0$), then we can fix as in \citep{mishura2018fractional} a subset $\Omega_1$ of the underlying sample space $\Omega$ with $\mathds{P}(\Omega_1) = 1$ such that for each $\omega \in \Omega_1$, $\alpha > 0$,
           $$|W_t^H(\omega) - W_s^H(\omega)| \leq c |t - s|^{H-\alpha},\,\,\, \forall s, t \in [0, T]$$
\noindent
where $c = c(T, \omega, \alpha)$ is a random constant depending on $T$, $\omega$ and $\alpha$.  Our assumption $\mathds{P}(\tau < T) > 0$  implies
  $$ \mathds{P}(\tau < T) =  \mathds{P}\{\omega \in \Omega_1: \tau(\omega) < T\} > 0.$$
\noindent
Now choose $\omega \in \Omega_1$ with  $\tau(\omega) < T$.  It is given that  the process $(Z_t)$ starts at the point $Z_0 >0$. Using condition (D2), for fixed $T>0$, we take a point $x_T$ small enough such that $0 < x_T < Z_0$.
Let $S$ be the first time $(Z_t)$ hits the value $x_T$, that is,
$S = \inf\{t: Z_t = x_T\}.$  Consider a small number $\varepsilon $ such that
$0  < \varepsilon < x_T.$
Since $f(t, x) > 0$ for all $0 < t \leq T$  and $0 \leq x \leq x_T$, then in particular that
 $f(t, x) >  0$  for all $S \leq t \leq T$ and $0 \leq x \leq \varepsilon$.
Let $A = \inf\{f(t,x): S \leq t \leq T, 0 \leq x \leq x_T\}.$ Clearly $A>0$.
Let $\tau_\varepsilon$ be the {\it last time}  the process $(Z_t)$ hits $\varepsilon$ before reaching zero, that is, $$ \tau_\varepsilon(\omega) = \sup\{t\in(0,\tau(\omega)): Z_t(\omega) = \varepsilon\}.$$
\noindent
Clearly $0 < S < \tau_{\epsilon} < \tau < T.$
The equality
      $$  Z_t = Z_0 + \frac{1}{2}\int_{0}^{t} f(s,Z_s)Z_s^{-1}ds + \frac{\sigma}{2} W_t^H,  $$
\noindent
implies in particular that
      $$
      Z_\tau - Z_{\tau_\varepsilon} = \frac{1}{2}\int_{\tau_\varepsilon}^{\tau} f(s,Z_s)Z_s^{-1} ds + \frac{\sigma}{2}\left(W^H_\tau - W_{\tau_\epsilon}^H\right).
      $$
\noindent
Since $Z_{\tau} = 0$ and   $Z_{\tau_\varepsilon} = \varepsilon$, then
        $$
            \frac{1}{2}\int_{\tau_\varepsilon}^{\tau} f(s,Z_s)Z_s^{-1} ds + \frac{\sigma}{2}\left(W^H_\tau - W_{\tau_\epsilon}^H\right) = -\varepsilon
        $$
\noindent
or equivalently,
        $$
             \frac{\sigma}{2}\left(W^H_\tau - W_{\tau_\epsilon}^H\right) = -\varepsilon - \frac{1}{2}\int_{\tau_\varepsilon}^{\tau} f(s,Z_s)Z_s^{-1} ds.
        $$
\noindent
Since for all $s \in [\tau_{\varepsilon}, \tau) \subset [S, T]$, it is the case that $Z_s \in [0, \varepsilon] \subset [0, x_T]$, then by condition (D2),
       $$f(s, Z_s) > 0 \mbox{ for all } s \in [\tau_{\varepsilon}, \tau].$$
\noindent
This implies that
    $$
    \frac{\sigma}{2}\left|W^H_\tau - W_{\tau_\epsilon}^H\right|  =  \varepsilon + \frac{1}{2}\int_{\tau_\varepsilon}^{\tau} f(s,Z_s)Z_s^{-1}ds
    $$
\noindent
or equivalently
     $$
     \sigma \left|W^H_\tau - W_{\tau_\epsilon}^H\right|  =  2\varepsilon + \int_{\tau_\varepsilon}^{\tau} f(s,Z_s)Z_s^{-1}ds.
     $$
\noindent
 Since $\omega \in \Omega_1$, and $\tau_\varepsilon, \tau  \in [0, T]$, then
                   $$\left|W^H_\tau - W_{\tau_\epsilon}^H\right|  < c  \big|\tau - \tau_\varepsilon\big|^{H-\alpha}.$$
\noindent
Hence
  $$ 2\varepsilon + \int_{\tau_\varepsilon}^{\tau} f(s,Z_s)Z_s^{-1}ds \leq   \sigma c \big|\tau - \tau_\varepsilon\big|^{H-\alpha}.$$
On the other hand
\begin{equation}\label{3-7}
  \int_{\tau_\varepsilon}^{\tau} f(s,Z_s)Z_s^{-1}ds  \geq  \int_{\tau_\varepsilon}^{\tau}  A \varepsilon^{-1} ds = A \varepsilon^{-1} (\tau - \tau_\varepsilon).
\end{equation}

\noindent
Therefore
$$
2 \varepsilon + A\varepsilon^{-1} (\tau - \tau_\varepsilon) \leq \sigma c \big|\tau - \tau_\varepsilon\big|^{H-\alpha}
$$
from which it follows that
       \begin{equation}\label{3-10}
            A\varepsilon^{-1}(\tau - \tau_\varepsilon) - c\sigma\big|\tau - \tau_\varepsilon\big|^{H-\alpha} + 2\varepsilon \leq 0.
        \end{equation}
Consider the function $F_\varepsilon$ defined by
 $$F_\varepsilon (x) = A \varepsilon^{-1}x - c\sigma x^{H-\alpha} + 2\varepsilon,$$
that is, $F_\varepsilon(x)$ is obtained by replacing $\tau - \tau_\varepsilon $ with $x$. Then the inequality (\ref{3-10}) yields
\begin{equation}\label{3-11}
F_\varepsilon(\tau - \tau_\epsilon) \leq 0,
\end{equation}
\noindent
for every $\epsilon >0$. The next step in this proof is to show that the inequality in (\ref{3-11}) does not hold.
In fact we shall construct a number $\epsilon^* >0$  such that uniformly for all $0 < \varepsilon < \varepsilon^*$,
$F_\varepsilon(x) > 0$ for all $x \geq 0$. This will conclude the proof of the theorem. We will see that the conditions $H > 1/2$ and $A >0$ (based on (D2)) are necessary.
First of all, it is clear that $F_\varepsilon (0) = 2 \varepsilon  > 0$.
Let us find all critical points of $F_\varepsilon (x)$. Clearly, the first and second derivatives with respect to $x$ are respectively given by
$$
F'_\varepsilon (x) = A\varepsilon^{-1} - c\sigma(H-\alpha) x^{H-\alpha-1}
$$
\noindent
and
$$
F''_\varepsilon (x)  = - c \sigma(H-\alpha)(H-\alpha-1)x^{H-\alpha-2}.
$$
\noindent
It is clear that $F_\varepsilon (x)$ is convex as $F_\varepsilon'' (x)>0.$ Moreover, the critical point $\hat{x}$  of $F_\varepsilon(x)$ is given by
$$
\hat{x} = \(\frac{A \varepsilon^{-1}}{c\sigma(H-\alpha)}\)^{\frac{1}{H-\alpha - 1}}.
$$
Note that $\hat{x}$ is well defined since $A > 0.$ Hence,

\begin{equation*}
\begin{aligned}
F_\varepsilon (\hat{x}) &= A\varepsilon^{-1}\hat{x} - c\sigma \hat{x}^{H-\alpha} + 2\varepsilon \\
                         &= \hat{x}\(A\varepsilon^{-1} - c\sigma \hat{x}^{H-\alpha-1}\) + 2\varepsilon\\
                         &= \hat{x}\(A\varepsilon^{-1} - \frac{A \varepsilon^{-1}}{H-\alpha}\) + 2\varepsilon\\
                         &=  \frac{\hat{x}A \varepsilon^{-1}(H-\alpha-1)}{H-\alpha}+2\varepsilon\\
                         &=  \(\frac{A^{H-\alpha}}{c\sigma(H-\alpha)^{2+\alpha-H}}\)^{\frac{1}{H-\alpha - 1}}\varepsilon^{\frac{H-\alpha}{1-H+\alpha}}(H-\alpha -1) +2\varepsilon. \\
\end{aligned}
\end{equation*}
Since $H-\alpha - 1 < 0$, then

$$
F_\varepsilon (\hat{x}) \geq  \(\frac{A^{H-\alpha}}{c\sigma(H-\alpha)^{2+\alpha-H}}\)^{\frac{1}{H-\alpha - 1}}\varepsilon^{\frac{H-\alpha}{1-H+\alpha}}(H-\alpha -1) +2\varepsilon.
$$
\noindent
Set
\begin{eqnarray*}
\kappa  & = & -\(\frac{A^{H-\alpha}}{c\sigma(H-\alpha)^{2+\alpha-H}}\)^{\frac{1}{H-\alpha - 1}}(H-\alpha -1)\\
q &  = & \frac{H-\alpha}{1-H+\alpha}.
\end{eqnarray*}
\noindent
Clearly, since $H > 1/2$, we can choose $\alpha$ so small that $H > \frac{1}{2} + \alpha$ and obtain that $q \geq 1$.  Then it follows that
$$ F_\varepsilon (\hat{x})\geq - \kappa \varepsilon^{q} +2\varepsilon. $$
It is now an easy matter to show that there exists $\varepsilon^* >0$ such that for all $0 < \varepsilon < \varepsilon^*$, it is the case that
          $$ F_\varepsilon (\hat{x})\geq - \kappa \varepsilon^{q} +2\varepsilon > 0.$$
Indeed, choosing $\varepsilon^*  \leq  \left(\frac{2}{\kappa}\right)^{\frac{1}{q-1}}$ yields $ F_\varepsilon (\hat{x}) > 0$. (Note that $\varepsilon^*$ because $A \ne 0$.)  Hence $ F_\varepsilon (x) > 0$ for all $x\geq 0$. This concludes the proof of the theorem.
\end{proof}

\section{Analysis of positiveness of $(X_t)_{t\geq 0}$ for $H < 1/2$}

\noindent
We shall consider a sequence of continuous functions $$f_k(t, z): [0, \infty) \times [0, \infty) \to (-\infty, +\infty),\,\,\,k \in \mathbb{N}$$ such that each function $f_k$ satisfies conditions (D1) and (D2).
Moreover for each point $(t,z) \in [0, \infty) \times [0, \infty)$,
$$f_k(t,z) \leq f_{k+1}(t,z) \mbox{ and } \lim_{k \to \infty} f_k(t,z) = \infty.$$
Consider, for each $k$, the stochastic process $(Z_t^{(k)})_{t\geq 0}$ defined by
\begin{eqnarray*}
 Z_t^{(k)}= \left\{ \begin{array}{ll}
          Z_0 + \mathlarger\int_{0}^{t}\frac{1}{2} f_k(t,Z_s^{(k)})\(Z_s^{(k)}\)^{-1}ds+\frac{\sigma}{2} W^H_t & \mbox{ if }  t<\tau^{(k)}(\omega)\\
           0 & \mbox{ otherwise,}
           \end{array}
           \right.
 \end{eqnarray*}
where $\tau^{(k)}(\omega) = \inf\{t\geq 0 : Z_t^{(k)}(\omega) = 0\}.$
We have the following result:

\begin{thm}\label{Thm3-4}
For any $T>0$,
          $$\mathds{P}(\tau^{(k)}(\omega) > T) \to 1 \mbox{ as } k \to \infty.$$
\end{thm}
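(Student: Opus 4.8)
The plan is to recycle the analysis of Theorem~\ref{Thm3-3} almost verbatim --- the computation producing the convex function $F_\varepsilon$ never actually used $H>1/2$ --- and then to exploit the growth of the drifts $f_k$ to push the hitting event into the tail of the (almost surely finite) H\"older constant. First I would couple all the processes by driving each $(Z^{(k)}_t)$ with the \emph{same} sample path of $(W^H_t)$. Since the noise enters additively, the difference $Y_t:=Z^{(k+1)}_t-Z^{(k)}_t=\int_0^t\big(\tfrac{f_{k+1}(s,Z^{(k+1)}_s)}{2Z^{(k+1)}_s}-\tfrac{f_k(s,Z^{(k)}_s)}{2Z^{(k)}_s}\big)ds$ has the fractional Brownian motion cancel out, so $Y$ is absolutely continuous with $Y_0=0$. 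At any $t_0$ with $Y_{t_0}=0$ one has $Z^{(k+1)}_{t_0}=Z^{(k)}_{t_0}=:z>0$ and $Y'(t_0)=\big(f_{k+1}(t_0,z)-f_k(t_0,z)\big)/(2z)\ge 0$, so $Y$ cannot turn negative; hence $Z^{(k)}_t\le Z^{(k+1)}_t$ up to the first visit to zero, which gives $\tau^{(k)}\le\tau^{(k+1)}$. Fixing a level $x_T\in(0,Z_0)$ for which (D2) holds for $f_1$ (hence for every $f_k$, as they increase), the first passage times $S_k:=\inf\{t:Z^{(k)}_t=x_T\}$ then satisfy $S_1\le S_2\le\cdots$, so $S_k(\omega)\ge S_1(\omega)>0$ for a.e.\ $\omega$.

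Next I would fix $\omega\in\Omega_1$ with $c(\omega)<\infty$ and $S_1(\omega)>0$, choose $\alpha\in(0,H)$ and a \emph{fixed} level $\varepsilon\in(0,x_T)$, and suppose $\tau^{(k)}(\omega)<T$. Repeating the estimates of Theorem~\ref{Thm3-3} with $f$ replaced by $f_k$ and with $A$ replaced by $A_k:=\inf\{f_k(t,x):S_k\le t\le T,\ 0\le x\le x_T\}>0$ (note $S_k<\tau_\varepsilon^{(k)}$, and on the final descent $Z_s\in[0,\varepsilon]\subseteq[0,x_T]$), I obtain exactly the inequality $F^{(k)}_\varepsilon(\tau^{(k)}-\tau^{(k)}_\varepsilon)\le 0$, where $F^{(k)}_\varepsilon(x)=A_k\varepsilon^{-1}x-c\sigma x^{H-\alpha}+2\varepsilon$. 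This $F^{(k)}_\varepsilon$ is still convex, with global minimum equal to $-\kappa_k\varepsilon^{q}+2\varepsilon$, where $q=\tfrac{H-\alpha}{1-H+\alpha}$ and $\kappa_k=C_1\,c(\omega)^{1/(1-H+\alpha)}A_k^{-(H-\alpha)/(1-H+\alpha)}$ for a constant $C_1=C_1(H,\alpha,\sigma)>0$. The new feature for $H<\tfrac12$ is that now $q<1$, so --- unlike in Theorem~\ref{Thm3-3} --- positivity can no longer be forced by shrinking $\varepsilon$. Instead I keep $\varepsilon$ fixed and read the inequality $F^{(k)}_\varepsilon\le 0$ at the minimizer as a lower bound on the H\"older constant,
\begin{equation*}
c(\omega)\ \ge\ M_k:=\Big(\tfrac{2\varepsilon^{\,1-q}}{C_1}\Big)^{1-H+\alpha}A_k^{\,H-\alpha}.
\end{equation*}

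Finally I would let $k\to\infty$. Since the $f_k$ are continuous and increase to $+\infty$ pointwise, Dini's theorem on the compact set $[S_1(\omega),T]\times[0,x_T]$ gives $\inf\{f_k:S_1(\omega)\le t\le T,\ 0\le x\le x_T\}\to\infty$; because $S_k\ge S_1(\omega)$ this forces $A_k(\omega)\to\infty$, and hence $M_k\to\infty$. For our fixed $\omega$ the constant $c(\omega)$ is finite, so $c(\omega)\ge M_k$ can hold for only finitely many $k$; therefore $\tau^{(k)}(\omega)\ge T$ for all large $k$. As this holds for a.e.\ $\omega$ and for every horizon $T$, the increasing sequence $(\tau^{(k)})$ in fact satisfies $\tau^{(k)}\to\infty$ almost surely, so $\mathbf 1\{\tau^{(k)}>T\}\to 1$ a.s.; bounded convergence then yields $\mathds{P}(\tau^{(k)}>T)\to 1$, which is the claim (one may run the argument with a horizon $T'>T$ to dispose of the boundary $\{\tau^{(k)}=T\}$).

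I expect the main obstacle to be precisely the control of $A_k$. The infimum defining it must be taken over a time interval bounded away from $t=0$, because (D2) only guarantees $f_k>0$ for $t>0$ and the values may degenerate as $t\downarrow 0$, so $\inf_{[0,T]\times[0,x_T]}f_k$ need not grow at all. What rescues the argument is the additive-noise comparison, which keeps the first passage time $S_k$ monotone and hence bounded below by $S_1(\omega)>0$; combined with Dini's theorem this upgrades the merely pointwise divergence $f_k\to\infty$ into divergence of the relevant infimum, driving $M_k\to\infty$. The two points needing the most care are verifying the comparison rigorously (using the uniqueness up to the hitting time already supplied by Theorem~\ref{Thm3-3}) and checking that the regime $q<1$ does not spoil the monotonicity $M_k\to\infty$ --- which it does not, since the exponent of $A_k$ in $M_k$ is $H-\alpha>0$.
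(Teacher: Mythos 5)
Your proposal is correct, and its analytic core is the same as the paper's: both recycle the descent estimate of Theorem~\ref{Thm3-3} (H\"older bound against drift lower bound, the convex function $F_\varepsilon$), and both make the divergence of the drift infimum $A_k$ --- rather than the smallness of $\varepsilon$, which is unavailable since $q<1$ when $H<1/2$ --- carry the conclusion. Where you genuinely differ is the probabilistic wrapper, and yours is cleaner. The paper argues by contradiction: it assumes $\mathds{P}(\tau^{(k_n)}\le T)\to p>0$ along a subsequence, works on the intersection event of probability $p$, extracts a further subset $E$ of positive probability on which the random H\"older constant is dominated by a non-random $M$, and then plays the two inequalities in~(\ref{3-16}) against each other on $E$. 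You instead work pathwise: for a.e.\ fixed $\omega$ you re-read $F^{(k)}_\varepsilon\le 0$ as the lower bound $c(\omega)\ge M_k(\omega)$ with $M_k(\omega)\to\infty$, conclude that $\{\tau^{(k)}<T\}$ can occur for only finitely many $k$, and then obtain $\tau^{(k)}\to\infty$ almost surely followed by bounded convergence. This buys a strictly stronger conclusion (a.s.\ divergence of the hitting times, not merely convergence of probabilities) and dispenses with the subsequence and the uniform-bound extraction. You also fill in two points the paper dismisses with ``clearly'': the passage from pointwise divergence $f_k\uparrow\infty$ to divergence of the infimum $A_k$ (your Dini-type compactness argument is the missing justification), and the need to keep the time interval defining $A_k$ away from $t=0$, which you secure via the monotone bound $S_k\ge S_1>0$; the paper's $S$ is left ambiguous about its dependence on $k$.

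One step to tighten: the one-line comparison argument (``$Y'(t_0)\ge 0$ at a zero of $Y$, hence $Y$ cannot turn negative'') is not by itself conclusive when $f_{k+1}(t_0,z)=f_k(t_0,z)$. The standard repair: suppose $Y_{t_1}<0$, let $t_0$ be the last zero of $Y$ before $t_1$, and on $(t_0,t_1]$ write
\begin{equation*}
Y'_s=\bigl(g_{k+1}(s,Z^{(k+1)}_s)-g_k(s,Z^{(k+1)}_s)\bigr)+\bigl(g_k(s,Z^{(k+1)}_s)-g_k(s,Z^{(k)}_s)\bigr)\ \ge\ 0 - C\,|Y_s| \ =\ C\,Y_s,
\end{equation*}
using $f_{k+1}\ge f_k$ and the local Lipschitz property of $g_k$ on compacts away from zero; Gr\"onwall then forces $Y\ge 0$ on $[t_0,t_1]$, a contradiction. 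The paper does not prove this comparison either (it asserts that Mishura's observation ``extends immediately''), but in your argument it is load-bearing, since $S_k\ge S_1$ feeds the Dini step, so it deserves these extra lines.
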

\noindent
 \textbf{Proof.}  The case where $f_k(t,z) = k - a z^2$ for some $k, a>0$ is studied by \citet{mishura2018fractional}. Their proof is based on the observation that for $k_1< k_2$,
 $$\tau^{(k_1)}(\omega) \leq \tau^{(k_2)} (\omega) \mbox{ and } Z_t^{(k_1)} (\omega) <  Z_t^{(k_2)}(\omega)$$
 for all $t$ such that $0 < t < \tau^{(k_2)}(\omega).$ It is easy to see that this extends immediately to our general case. We assume that there exist  $T>0$, an increasing sequence $(k_n)_{n>1}$ and $p>0$ such that
\begin{equation}\label{3-14}
\mathds{P}(\tau^{(k_n)} \leq T) \rightarrow p, ~~~ k_n \to \infty.
\end{equation}
\noindent
As in the previous proof, for fixed $T>0$, consider a point $x_T$ small enough such that $0 < x_T < Z_0$ and
 $S >0$ be the first time $(Z_t)$ hits the value $x_T$. Take $0  < \varepsilon < x_T.$
Then uniformly for all $k \in \mathbb{N}$,
 $f_k(t, x) >  0$  for all $S \leq t \leq T$ and $0 \leq x \leq \varepsilon$.
Let $A = \inf\{f(t,x): S \leq t \leq T, 0 \leq x \leq x_T, \}.$ Clearly $A>0$.
Also let $ \tau_\varepsilon^{(k_n)} = \sup\{t\in(0,\tau):Z_t^{(k_n)} = \varepsilon\} $ be the last hitting time of $\varepsilon$ before reaching zero.
Let
   $$A_k = \inf\{f_k(t, z): S\leq t \leq T, 0\leq z \leq Z_0\}, \,\,k >0.$$
Moreover, consider for a small number $\alpha>0$, (by the  H\"{o}lder continuity) the subspace $\Omega_1$ of probability $1$ such that
           $$|W_t^H(\omega) - W_s^H(\omega)| \leq c |t - s|^{H-\alpha}, \mbox{ for all } s, t \in [0, T]$$
where $c = c(T, \omega, \alpha)$ is a constant depending on $T$, $\omega$ and $\alpha$. Let
\begin{equation}\label{3-15}
\Omega^{(k_n)}_{T} =\big\{\omega\in\Omega_1:\tau^{(k_n)}\leq T\big\}.
\end{equation}

\noindent
Then, for all $\omega \in \Omega^{(k_n)}_{T} $,  similar arguments as in the proof of Theorem \ref{Thm3-3} yield
$$
Z_{\tau^{(k_n)}}^{(k_n)} - Z_{{\tau_\varepsilon}^{(k_n)}}^{(k_n)} = -\varepsilon = \frac{1}{2}\int_{{\tau_\varepsilon}^{(k_n)}}^{{\tau}^{(k_n)}}f_{k_n}(t,Z_s^{(k_n)})\left(Z_s^{(k_n)}\right)^{-1}ds+\frac{\sigma}{2} \big(W^H_{\tau^{(k_n)}} - W^H_{{\tau_\varepsilon}^{(k_n)}}\big).$$

\noindent
In a similar way as in the previous proof,
    $$f_{k_n}(t,Z_s^{(k_n)})\left(Z_s^{(k_n)}\right)^{-1} \geq A_{k_n} \varepsilon^{-1}, ~~~~ \forall s  \in [{\tau_\varepsilon}^{(k_n)}, {\tau}^{(k_n)}].$$
  Since
$$
\Big| W^H_{\tau^{(k_n)}} - W^H_{\tau_\varepsilon^{(k_n)}}\Big| \leq c\Big|\tau^{(k_n)} - \tau_\varepsilon^{(k_n)}\Big|^{H-\alpha},
$$
it follows (as in the previous proof) that
$$
c\sigma\Big(\tau^{(k_n)} - \tau_\varepsilon^{(k_n)}\Big)^{H-\alpha}\geq A_{k_n} \varepsilon^{-1} (\tau^{(k_n)} - \tau_\varepsilon^{(k_n)}) +2\varepsilon.
$$

\noindent
 This implies in particular that
\begin{equation}\label{3-16}
\begin{cases}
  c\sigma\Big(\tau^{(k_n)} - \tau_\varepsilon^{(k_n)}\Big)^{H-\alpha}\geq  2\varepsilon \\
  c\sigma\Big(\tau^{(k_n)} - \tau_\varepsilon^{(k_n)}\Big)^{H-\alpha}\geq A_{k_n}(\tau^{(k_n)} - \tau_\varepsilon^{(k_n)}) \varepsilon^{-1}.
\end{cases}
\end{equation}

 \noindent
We shall show that the two inequalities are contradictory. Elementary calculations show that the second inequality in (\ref{3-16}) is equivalent to
   $$\Big(\tau^{(k_n)} - \tau_\varepsilon^{(k_n)}\Big) \leq \left(\frac{1}{c \sigma} A_{k_n} \varepsilon^{-1} \right)^{\frac{1}{H-\alpha -1}}$$
\noindent
Taking both sides with power $H - \alpha$ and thereafter multiplying both sides by $c \sigma$ yields
\begin{eqnarray*}
c\sigma\Big(\tau^{(k_n)} - \tau_\varepsilon^{(k_n)}\Big)^{H-\alpha} & \leq &   c \sigma \left(\frac{1}{c \sigma} A_{k_n} \varepsilon^{-1} \right)^{\frac{H-\alpha}{H-\alpha -1}}\\
    & = & \left(c^{\frac{1}{1-H+\alpha}}\right) \left( \sigma^{\frac{1}{1-H+\alpha}}\right) \varepsilon^{\frac{H-\alpha}{1-H+\alpha}} \left(A_{k_n}\right)^{-\frac{H-\alpha}{1- H+\alpha}}
\end{eqnarray*}
In the right hand side, the H\"older constant $c=c(\omega)$ is random depending on the path $\omega$ of \emph{fBm}. As in \citet{mishura2018fractional}, it is well-known that $c(\omega)$ is finite almost surely and hence since $\mathds{P}\Big(\bigcap_{n>1} \Omega^{(k_n)}_{T} \Big) = p  >0$, then there exists a (non-random) constant $M$ and a subset $E$ of $\bigcap_{n>1} \Omega^{(k_n)}_{T}$ with $\mathds{P}(E) >0$ such that
      $c = c(\omega) \leq M$ for all $\omega \in E$.
Hence, everywhere in $E$,
      \begin{eqnarray*}
c\sigma\Big(\tau^{(k_n)} - \tau_\varepsilon^{(k_n)}\Big)^{H-\alpha} \leq  \left(M ^{\frac{1}{1-H+\alpha}}\right) \left( \sigma^{\frac{1}{1-H+\alpha}}\right) \varepsilon^{\frac{H-\alpha}{1-H+\alpha}} \big(A_{k_n}\big)^{-\frac{H-\alpha}{1-H+\alpha}}.
\end{eqnarray*}
Clearly $M$ and $\sigma$ are constants. Moreover, since $f_n(t,z) \to \infty$ as $n \to \infty$ (for every $(t,z)$) then clearly also $A_{k_n} \to \infty$ for $k_n \to \infty$.  Hence
           $$\lim_{k_n\to \infty}  \left(A_{k_n}\right)^{-\frac{H-\alpha}{1-H+\alpha}} = 0, $$
because $-\frac{H-\alpha}{1-H+\alpha} < 0.$ Then clearly, for any given $\varepsilon >0$,   we can choose $k_n$ very large (depending on $\varepsilon$) such that
                       $$\left(M ^{\frac{1}{1-H+\alpha}}\right) \left( \sigma^{\frac{1}{1-H+\alpha}}\right) \varepsilon^{\frac{H-\alpha}{1-H+\alpha}} \left(A_{k_n} \right)^{-\frac{H-\alpha}{1-H+\alpha}} < 2 \varepsilon.$$
This yields
          $$ c\sigma\Big(\tau^{(k_n)} - \tau_\varepsilon^{(k_n)}\Big)^{H-\alpha} <  2\varepsilon, $$ which contradicts the first inequality in (\ref{3-16}). This concludes the proof of the theorem. \hfill{$\Box$}

\section{Some illustrating examples with simulations}\label{Illust}
In this section, we provide some examples of generalised \emph{fCIR} processes to illustrate the results of this paper using simulations. The process that will be used represents a generalisation of the classical \emph{``extended CIR''} process. \\

\noindent
The classical extended \emph{CIR} process is defined by
\begin{equation}\label{4-1}
  dX_t = \theta_t(\mu_t - X_t)dt + \sigma\sqrt{X_t}dW_t, \,\, X_0 > 0
\end{equation}
\noindent
where $\theta_t$ is the time-depending speed of reversion towards its time-depending long run mean $\mu_t$ of the process $(X_t)_{t\geq 0}$ and $\sigma$ a positive parameter. This model was initially introduced by \citet{hull1990pricing} and it is widely used in both short interest rates and spot volatilities modelling. The choice of parameters $\theta_t$ and $\mu_t$ are done through market calibration.
As already discussed,  we shall consider the general case where the Brownian motion is replaced with a \emph{fBm}. The process is called \emph{``Extended fCIR''} and takes the form
  \begin{equation}\label{4-1a}
  X_t = Z_t^2 \mathbf{1}_{[0, \tau)}, ~~~ t\geq 0
  \end{equation}
where
    \begin{equation}\label{4-1b}
        dZ_t = \frac{f(t,Z_t)}{2Z_t}dt+\frac{\sigma}{2}dW_t^H, ~~~ Z_0 > 0
    \end{equation}
with
  \begin{equation}\label{4-1c}
 f(t, x) = \theta_t(\mu_t - x^2).
 \end{equation}

\noindent
We shall then simulate the  corresponding process $(X_t)$ on a finite interval $[0, T]$ using the well-known Euler-Maruyama method. (See e.g. \citet{higham2002strong} for more details about the method.)  Subdivide the interval $[0,T]$ into $N$ subintervals of equal length $\delta t = T/N$ with end points $0=t_0, t_1, t_2, \ldots,t_N = T$. The corresponding discrete  version of the process $(X_t)_{t\geq 0}$ is given by
$$
X_{t_n} = Z_{t_n}^2,
$$
\noindent
where $Z_0 >0$ and
 for $n = 1,2,\ldots, N$,
\begin{eqnarray*}
Z_{t_n} = \left\{ \begin{array}{cc}
Z_{n-1} + \frac{f(t_{n-1},Z_{t_{n-1}})}{2Z_{t_{n-1}}}\delta t + \frac{\sigma}{2} \delta W^H_{t_n} & \mbox{ if } Z_{t_{n-1}} > 0,\\
0 & \mbox{ otherwise}
\end{array}
\right.
\end{eqnarray*}
with
$$\delta W^H_{t_n} = W^H_{t_n} - W^H_{t_{n-1}}.$$

\noindent
In what follows, we shall consider two different drift functions for simulation of the process (\ref{4-1a}).

\subsection*{Illustration I}

We consider $\theta_t = \theta >0$ and

  $$
  \mu_t = c + \frac{\sigma^2}{2\theta} \Big(1 - e^{-2\theta t}\Big)
  $$
\noindent
where $c>0$ is a constant. This yields the drift function
   \begin{equation}\label{4-3}
  f(t,x) =  \frac{\sigma^2}{2} \Big(1 - e^{-2\theta t}\Big) + \theta(c - x^2), \,\,\, t\geq 0, x \geq 0.
\end{equation}
\noindent
It is clear that the function $f(t,x)$ satisfies conditions (D1) and (D2) and hence for
We simulate 1000 sample paths of the process $(X_t)_{t\in[0,T]}$  where $T = 10$, volatility $\sigma = 0.4$ starting at $X_0 = 1$ with time-step $\delta t = 0.001$ and the results are given in Figures 4.1 to 4.4 (with given parameters $c$, $\theta$ and $H$).
All the sample paths in Figures 4.1 and 4.2 where $H >0.5$ are strictly positive (do not hit zero) in line with Theorem 3.3.


\begin{center}
\begin{tabular}{cc}
  \includegraphics[width=2.4in, height=2.2in]{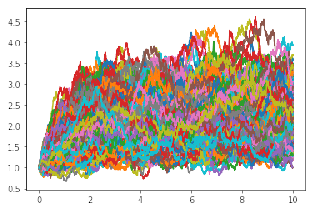} & \includegraphics[width=2.5in, height=2.2in]{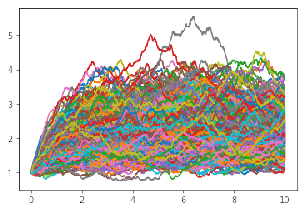}  \\
  Figure 4.1 & Figure 4.2 \\
  $\theta = 1, ~ c = 2, ~H=0.6$ & $\theta = 1, ~ c = 2, ~H=0.8$ \\
  \end{tabular}
\end{center}






\subsection*{Illustration II}

In the second illustration, we consider again $\theta_t = \theta >0$, $\sigma > 0$ and

$$
\mu_t = \Big(1 + \frac{c}{\theta}\Big )e^{ct} + \frac{\sigma^2}{2\theta} \Big(1 - e^{-2\theta t}\Big),
$$

\noindent
where $c>0$ is a constant. This yields the function

 \begin{equation}\label{4-6}
  f(t,x) = \Big(\theta + c\Big )e^{ct} + \frac{\sigma^2}{2} \Big(1 - e^{-2\theta t}\Big) - \theta x^2,
\end{equation}
It is again clear that $f(t,x)$ satisfies conditions (D1) and (D2). We considered  1 000 realisations of the sample paths of the stochastic process $(X_t)_{t\in[0,10]}$ with volatility $\sigma = 0.4$ starting at $X_0 = 1$ with time-step $\delta t = 0.001$. We have observed similar results compared to Simulation I and the output is given from Figures  4.6 to 4.9.

\begin{center}
\begin{tabular}{cc}
  \includegraphics[width=2.5in, height=2.3in]{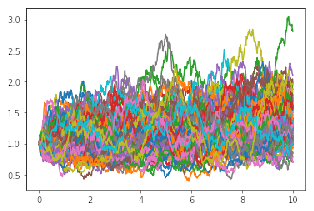} & \includegraphics[width=2.5in, height=2.3in]{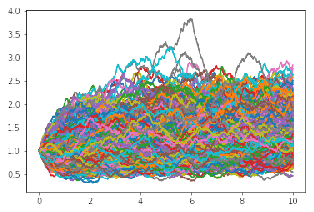}  \\
  Figure 4.6 & Figure 4.7 \\
  $\theta = 1, ~ c = 0.02, ~H=0.6$ & $\theta = 1, ~ c = 0.02, ~H=0.8$ \\
  \end{tabular}
\end{center}

\begin{center}
\begin{tabular}{cc}
 \includegraphics[width=2.5in, height=2.3in]{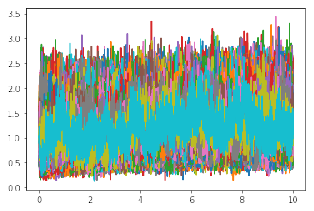}  & \includegraphics[width=2.5in, height=2.4in]{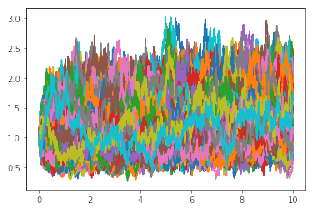}  \\
 Figure 4.8 & Figure 4.9\\
 $\theta = 1, ~ c = 0.02, ~H=0.1$ & $\theta = 1, ~ c = 0.02, ~H=0.4$ \\
\end{tabular}
\end{center}


\section*{Concluding remarks}

In this work, we analysed the general \emph{fCIR} processes of the form $X^2_t = Z^2_t \mathbf{1}_{[0, \tau)}$ with  \begin{equation*}
            dZ_t = \mfrac{1}{2}f(t,Z_t)Z_t^{-1}dt + \mfrac{1}{2}\sigma dW_t^H,\,\, Z_0 >0,
        \end{equation*} where $f(t,x)$ is a continuous function on $\mathds{R}_+^2$ under two mild conditions on the function $f(t,x)$.
We proved that the process $(X_t)$ satisfies the equation $dX_t = f(t, \sqrt{X_t}) dt + \sigma \sqrt{X_t} \circ dW_t^H$. Moreover if the Hurst parameter  $H > 1/2$, the process $(X_t)_{t\geq 0}$ processes will never hit zero, that is, it remains strictly positive everywhere almost surely. The conditions (D1) and (D2) imposed on $f(t,x)$ are very weak so that the class of functions to which our results apply is clearly larger than previously understood.
In the case, $H<1/2$, we considered a sequence of increasing drift functions $(f_n)$ that tends to infinity and we proved that the probability of hitting zero converges to zero as $n$ goes to infinity. These results are illustrated with some simulations. The generalised \emph{fCIR} process may take several forms and one of them is given as an extended \emph{fCIR} or fractional Hull-White process. This process belongs to the class of mean-reverting processes and may yield perfect calibrations of time-dependent parameters. Calibration under \emph{fCIR} process constitutes an important area of further investigations. Another line of further research is to study the properties of moments of the process $(X_t)$ in order to see if results that have been obtained under more stronger conditions remain valid under the mild conditions (D1) and (D2). We hope to the results and discussions in this paper will be of some help in that direction. It is important to note that our results generalise previous results obtained by  \citet{mishura2018fractional} to the particular function $f(t,x) = \frac{1}{2}(\mu - \theta x^2)$ for constants $\mu>0$ and $\theta >0$.

\paragraph{Acknowledgment:} We would like to thank the anonymous reviewer whose comments have greatly improved the paper.

\nocite {*}

\bibliography {FCIR_ArXiV}

\bibliographystyle {plainnat}

\end{document}